\documentclass[a4paper,11pt]{article}
\usepackage{amsmath}
\usepackage{amssymb}
\usepackage{amsthm}
\usepackage{hyperref}
\hypersetup{
    colorlinks = false
}
\usepackage{color}
\newtheorem{theorem}{Theorem}[section]
\newtheorem{corollary}[theorem]{Corollary}
\newtheorem{lemma}[theorem]{Lemma}

\newtheorem{proposition}[theorem]{Proposition}
\theoremstyle{remark}

\newtheorem*{lemma*}{{\bf{Lemma}}}
\newtheorem*{remark}{{\bf{Remark}}}
\newcommand{\A}{{\mathcal{A}}}
\newcommand{\B}{{\mathcal{B}}}
\newcommand{\N}{{\mathbb{N}}}
\newcommand{\Z}{{\mathbb{Z}}}
\newcommand{\F}{{\mathbb{F}}}

\newif\ifshowsection
\showsectionfalse

\begin{document}

\author{Rainer Dietmann, Christian Elsholtz, 
Imre Ruzsa}
\title{Sieving with square conditions and applications 
to Hilbert cubes in arithmetic sets}  
\maketitle

\begin{abstract}
The purpose of this paper is twofold:
1) Applications of Gallagher's larger sieve modulo prime squares do not work. In some relevant cases we can transform the residue class  information modulo $p^2$ to more suitable residue information modulo $p$, so that we can successfully apply the sieve.

2) The applications to Hilbert cubes are of interest in their own right:
We study the maximal dimension of Hilbert cubes in various multiplicatively defined sets. For the squareful numbers in $[1,N]$  we achieve an upper bound of the dimension of $d=O(\log N)$. The same upper bounds follow for multiplicative semigroups of integers defined by a positive proportion of the primes, and the set of integers representable by an irreducible positive definite binary quadratic form. Eventually, making use of the sun flower lemma we give an improvement on the maximal dimension $d$ of subset sums in the set of pure powers in $[1,N]$.
\end{abstract}
\section{Introduction}
\subsection{Aims} There is an extensive literature\footnote{The authors acknowledge that today, on March 17, 2026, a paper with overlapping content has been posted by Croot, Mao and Yip on ArXiV, see \texttt{https://arxiv.org/pdf/2603.14654}, which prompted us to upload our paper.}
on the subject of sumsets
${\cal A+B \subseteq  S}:=\{a+b: a \in \A, b \in \B\}$
contained
in sets of arithmetic interest, such as the set of squares, primes, squareful
numbers, square-free numbers etc.
Also, a number of papers studied iterated sumsets,
e.g.~the dimension $d$ of Hilbert cubes
$\mathcal{H}(a_0;a_1, \ldots , a_d)=a_0 + \{0,a_1\}+\cdots + \{0,a_d\}\subseteq {\cal S}$
or subset sum cubes, i.e.~Hilbert cubes with $a_0=0$.
For many of these papers it has been crucial
to study the question modulo primes, and use this local information via a
sieve method to find bounds on the counting function
of the additive components ${\cal A,B}$ or the maximal dimension $d$.

In this paper we make progress by studying the problem modulo composite
numbers, and adapt required results from additive combinatorics and
sieve theory accordingly.

We hope that this paper is not only of interest because of its applications to
arithmetic sets, but also because of its methodological progress on sieves.

\subsection{Notation}
If $q$ is a positive integer, then we write $\Z_q$ for
$\Z/q\Z$. In particular, $\Z_p$ is the finite field $\F_p$
if $p$ is a prime.

For a set $\A$ of positive integers, we write
$\A(N)=\sum_{n \le N \atop n \in \A} 1$.

Let $\Sigma(\A)=\sum_{a \in \A} a$. 

As usual, we write $\lceil x \rceil$ for the smallest integer at least
as big as $x$.

\subsection{Old and new tools from additive combinatorics}

The following well known result is one of our main tools.
\begin{lemma}[Olson \cite{Olson:1968}, Theorem 1]
\label{olson}
Let $p$ be a prime, $\B \subset \Z/p\Z$ with $|\B|>2\sqrt{p}$ and
$a \in \Z/p\Z$. Then there exists a non-empty $\A \subset \B$ such that
$\sum(\A) \equiv a \pmod p$.
\end{lemma}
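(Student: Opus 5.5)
Since $0\notin\B$ may fail, first note that the statement is about $\B\subset\Z/p\Z$ with $|\B|>2\sqrt p$; if $0\in\B$ we may simply discard it, because removing $0$ costs at most one element and changes no nonzero subset sum. More precisely, I would prove the slightly stronger claim: if $\B\subset\Z_p\setminus\{0\}$ and $|\B|\ge \sqrt{4p-3}$ or so, then the set $\Sigma^*(\B)$ of nonempty subset sums is all of $\Z_p$. The case $0\in\B$ is trivial for $a=0$ (take $\A=\{0\}$) and for $a\neq 0$ reduces to $\B\setminus\{0\}$, of size $>2\sqrt p-1$, so I will aim for a bound of the shape $|\B|\ge 2\sqrt p-1$ for zero-free sets.

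The plan is an incremental growth argument. Order $\B=\{b_1,\dots,b_k\}$ and let $S_j$ be the set of subset sums (including the empty sum $0$) of $\{b_1,\dots,b_j\}$. Then $S_{j}=S_{j-1}\cup(S_{j-1}+b_j)$, and as long as $S_{j-1}\neq\Z_p$ this is a strict increase since $b_j\ne0$ generates $\Z_p$. The key step is to show that the increments are large on average: I want $|S_j|-|S_{j-1}|\gtrsim j/2$ after a suitable choice of ordering, which would give $|S_k|\gtrsim k^2/4$, and hence $S_k=\Z_p$ once $k^2/4\ge p$, i.e.\ $k\ge2\sqrt p$. For the increment, use that $|(S+b)\setminus S|$ equals the number of ``arithmetic progression breaks'': decomposing $S$ into maximal progressions of difference $b$, the gain is the number of such blocks. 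So I need to choose the next element $b$ among the remaining ones so that $S$ splits into many $b$-progressions. This is Olson's idea: average over the remaining elements $b$ and their negatives, using that $S$ is ``symmetric-ish'' (if $s\in S_j$ then $\Sigma(\{b_1..b_j\})-s\in S_j$), and a Cauchy--Davenport/Vosper type counting: $\sum_{b\in C}|(S+b)\setminus S|\ge \ldots$ where $C$ is the remaining set.

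Finally, to get all of $\Z_p$ including a prescribed $a$ as a \emph{nonempty} sum: once subset sums (with empty) cover more than half of $\Z_p$, note that the nonempty sums of $\B$ contain $S+b_k$ where $S$ is the subset sum set of the others; if $|S|>p/2$ and the symmetric set $\Sigma(\B\setminus\{b_k\})-S=S$, then the pigeonhole $S\cap(a-b_k-S')$ style argument shows every residue is hit. Alternatively just arrange the growth bound to give $|S_{k-1}|=p$ with $k-1$ elements, then $a=a-b_k+b_k$ handles nonemptiness when $a=0$.

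The main obstacle is the averaging step producing increments of size about $j/2$: naive Cauchy--Davenport only gives increment $\ge1$ (yielding $|S_k|\ge k+1$, useless). The gain must come from showing that $S$ cannot simultaneously be nearly a progression in direction $b$ for many distinct $b$; I would attack this via the observation that a set which is a union of few $b$-progressions for every $b$ in a large set $C$ must have $|S+C|$ small, contradicting Cauchy--Davenport applied to $S+C$ versus $|S|+|C|-1$ while $|S|<p/2$. Handling the regime $|S|>p/2$ separately (where one finishes by pigeonhole as above) is what makes the constant $2\sqrt p$ come out.
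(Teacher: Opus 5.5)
The paper gives no proof of this lemma; it is quoted from Olson. Your sketch therefore has to stand on its own, and its load-bearing step is missing. Everything rests on finding a remaining $b\in C$ whose gain $|(S+b)\setminus S|$ is of order $|C|/2$. The route you propose for this cannot work. If every $b\in C$ has gain at most $g$, the only consequence for the sumset is $|S+C|\le |S|+\sum_{b\in C}|(S+b)\setminus S|\le |S|+g|C|$. Comparing this with Cauchy--Davenport, $|S+C|\ge |S|+|C|-1$, gives back $g\ge 1$, which is exactly the bound you called useless. The trouble is that $|S+C|$ records only the union of the new elements, not how many translates produce them. For example, an interval $S$ with $C=\{1,\dots,c\}$ attains Cauchy--Davenport with equality, even though each $b$ has gain $\min(b,|S|,p-|S|)$. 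What you need is control of multiplicities, through $\sum_{b\in C}|(S+b)\setminus S|=|C||S|-\#\{(s,b)\in S\times C:\ s+b\in S\}$, together with an upper bound on the number of such pairs. That is what Pollard's theorem (the multiplicity version of Cauchy--Davenport) or a rearrangement inequality in $\Z_p$ supplies; plain Cauchy--Davenport cannot.

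Several further points would need repair even with such a lemma.

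Averaging over the remaining set certifies a gain in terms of $|C|$, which shrinks as you go. Every gain is also at most $\min(|S|,p-|S|)$; from $S_0=\{0\}$ the first gain is exactly $1$. So ``$\gtrsim j/2$ at step $j$'' is not what the method gives, and your count has to balance these caps.

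The identity $|(S+b)\setminus S|=|(S-b)\setminus S|$ holds for every $S$ by translation, so the symmetry of $S_j$ plays no role. What doubling the number of shifts really needs is $C\cap(-C)=\emptyset$. For $C=\{\pm1,\dots,\pm c'\}$ and $S$ an interval, the average gain is only $(c'+1)/2\approx |C|/4$, which leads to a threshold near $\sqrt{8p}$. Pairs $\{x,-x\}$ must therefore be handled separately.

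Your pigeonhole finish, $S\cap(x-S)\neq\emptyset$, writes $x$ as a sum of two subset sums of the \emph{same} elements. These may overlap, so the result is not a subset sum; you would need two disjoint parts of $\B$. Also, because your reduction spends one element, you actually need the threshold $2\sqrt p-1$, so your constants must be exact.

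A short route that avoids all of this is the Dias da Silva--Hamidoune theorem (reproved by Alon--Nathanson--Ruzsa). The set $h^{\wedge}\B$ of sums of $h$ distinct elements satisfies $|h^{\wedge}\B|\ge\min(p,\,h|\B|-h^2+1)$. Take $n=|\B|>2\sqrt p$ and $h=\lfloor n/2\rfloor\ge 1$. Then $h(n-h)+1=\lfloor n^2/4\rfloor+1\ge p+1$, so every $a\in\Z_p$ is a sum of exactly $h$ distinct elements of $\B$. This needs no special treatment of $0\in\B$ and gives nonemptiness for free.
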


The following result does not seem to exist in the literature but is key
to our application.

\begin{theorem}{\label{Ruzsa}}
Let $p$ be a prime and $q=pm$, for some integer $m>1$,
$\B\subset {\mathbb {Z}}_q$, $\left|\B \right|=n$. 
Suppose that the reduction of $\B$ modulo $p$ has more than
$4 \lceil 2 \sqrt{p} \rceil$ elements,
and assume that at least one element of $\B$ is not a multiple of $m$.
Then there is a subset $\A\subset \B$ with 
$\Sigma(\A)\equiv 0 \bmod p$ and 
$\Sigma(\A) \not\equiv 0 \bmod q$.
\end{theorem}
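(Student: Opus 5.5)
The plan is to argue by contradiction. Assume every subset $\A\subset\B$ with $\Sigma(\A)\equiv 0 \bmod p$ also has $\Sigma(\A)\equiv 0\bmod q$. From this I will build a homomorphism $f:\Z_p\to\Z_q$ lifting reduction mod $p$, and then use the element $b_0\in\B$ with $m\nmid b_0$ to contradict it.

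\emph{Step 1 (blocks).} Put $k=\lceil 2\sqrt p\rceil$. Since $\B$ has more than $4k$ distinct residues mod $p$, even after discarding $b_0$ and anything congruent to it mod $p$ we keep at least $4k$ elements with pairwise distinct residues mod $p$. I would split these into four pairwise disjoint blocks $\B_1,\dots,\B_4$, none containing $b_0$, each reducing injectively mod $p$. Each block should have more than $2\sqrt p$ residues, so that Lemma~\ref{olson} applies to every block. Exactly how the count $4\lceil 2\sqrt p\rceil$ is divided, with $b_0$ set aside, is bookkeeping I would redo carefully; if it is one short, the fourth block can be taken slightly smaller, since it is only used as an auxiliary complement. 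By Lemma~\ref{olson}, for every $t\in\Z_p$ and every $i$ there is a nonempty $\A\subset\B_i$ with $\Sigma(\A)\equiv t\bmod p$.

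\emph{Step 2 (well-definedness).} The basic observation is as follows. Suppose $\A,\A',\mathcal C$ are subsets of $\B$ with $\mathcal C$ disjoint from both $\A$ and $\A'$, and $\Sigma(\A)\equiv\Sigma(\A')\equiv t$, $\Sigma(\mathcal C)\equiv -t \bmod p$. Then $\A\cup\mathcal C$ and $\A'\cup\mathcal C$ have sums $\equiv 0\bmod p$. The hypothesis then gives $\Sigma(\A)\equiv-\Sigma(\mathcal C)\equiv\Sigma(\A')\bmod q$. Hence, using a complement from $\B_4$ (or $\B_3$), all subsets of $\B_1\cup\B_2\cup\{b_0\}$ with sum $\equiv t\bmod p$ share the same value mod $q$. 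Call this value $f(t)$. Clearly $f(t)\equiv t\bmod p$. We also have $f(0)=0$, directly from the hypothesis, since the sets realising $f(0)$ are nonempty by Lemma~\ref{olson}.

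\emph{Step 3 (additivity and contradiction).} For $t,u\in\Z_p$, take $\A\subset\B_1$ with sum $\equiv t$ and $\A'\subset\B_2$ with sum $\equiv u$. Their disjoint union has sum $\equiv t+u \bmod p$ and lies in $\B_1\cup\B_2$. By Step 2 its value mod $q$ is $f(t+u)$, and it is also $f(t)+f(u)$. So $f$ is a group homomorphism $\Z_p\to\Z_q$, and therefore $p\,f(t)=f(pt)=f(0)=0$ in $\Z_q$ for every $t$. Now $\{b_0\}$ is itself a subset with sum $\equiv b_0\bmod p$, and it is disjoint from a complement chosen in $\B_4$, so $f(b_0 \bmod p)=b_0$. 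Hence $pb_0\equiv0\bmod pm$, that is, $m\mid b_0$, which is a contradiction.

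The main obstacle is Step 2. Each comparison requires a complement set disjoint from both sets being compared, and the additivity step needs the same value $f(t)$ to be attained by sets lying in different blocks. This is exactly why four blocks are needed. Some care is also required so that the sets realising $f(t)$, $f(u)$, $f(t+u)$ and $\{b_0\}$ can all be compared through a single auxiliary block; I would route every comparison through $\B_4$ as the common complement.
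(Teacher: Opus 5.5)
Your proof is correct and is essentially the paper's argument: assuming no such $\A$ exists, you build a homomorphism $f:\Z_p\to\Z_q$ lifting reduction mod $p$ by completing sets to sum $0 \bmod p$ with a disjoint complement, and conclude $pb_0\equiv f(0)=0$, i.e.\ $m\mid b_0$. The only difference is that you draw all complements from a fixed reserve block $\B_4$, whereas the paper (Lemmas~\ref{lemmaA}--\ref{lemmaC}) uses arbitrary sets of size at most $k$ together with the bound $|\mathcal{V}_1\cup\mathcal{V}_2|\le|\B|-k$; your version in fact needs only $\B_1,\B_2,\B_4$. The count is not one short: removing the class of $b_0$ leaves at least $4k$ residues, and $k=\lceil 2\sqrt p\rceil>2\sqrt p$ because $2\sqrt p\notin\Z$. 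Note, though, that the block you could shrink or drop is the unused $\B_3$, not $\B_4$, which supplies every complement and so must satisfy the hypothesis of Lemma~\ref{olson}.
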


\begin{corollary}
\label{schwarzwald}
Let $p$ be a prime and let $q=p^\ell$ where $\ell>1$. Further, let
$\B \subset \Z_q$ with $|\B \mod p| \ge 5 \lceil 2 \sqrt{p} \rceil+2$.
Moreover, let $a_0 \in \Z_q$. Then there exists $\A \subset \B$ with
\begin{equation}
\label{wind}
  a_0+\sum (\A) \equiv 0 \pmod p
\end{equation}
and
\begin{equation}
\label{kuehl}
  a_0+\sum (\A) \not \equiv 0 \pmod q.
\end{equation}
\end{corollary}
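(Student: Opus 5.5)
The plan is to combine Lemma~\ref{olson} (Olson) with Theorem~\ref{Ruzsa}, using disjoint parts of $\B$ for the two steps. First we fix a subset $\mathcal{C}\subset\B$ consisting of exactly $\lceil 2\sqrt{p}\rceil+1$ elements whose residues modulo $p$ are pairwise distinct. This is possible because $|\B \bmod p|\ge 5\lceil 2\sqrt{p}\rceil+2$. The reduction of $\mathcal{C}$ modulo $p$ is then a subset of $\Z/p\Z$ of size $\lceil 2\sqrt p\rceil+1>2\sqrt p$. Lemma~\ref{olson}, applied with target $-a_0 \bmod p$, gives a non-empty $\A_1\subset\mathcal{C}$ with $a_0+\Sigma(\A_1)\equiv 0 \pmod p$. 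Here we lift the subset of residues back to elements of $\mathcal{C}$; this lift is unique since the residues are distinct. If also $a_0+\Sigma(\A_1)\not\equiv 0 \pmod q$, we take $\A=\A_1$ and are done.

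Otherwise $a_0+\Sigma(\A_1)\equiv 0\pmod q$. We then pass to $\B'=\B\setminus\mathcal{C}$. Removing $\mathcal{C}$ destroys at most $\lceil 2\sqrt p\rceil+1$ residue classes modulo $p$, so $|\B'\bmod p|\ge 4\lceil 2\sqrt p\rceil+1>4\lceil 2\sqrt p\rceil$. We apply Theorem~\ref{Ruzsa} to $\B'$ with $m=p^{\ell-1}$, so that $q=pm$ and $m>1$ because $\ell>1$. For this we must check the hypothesis that some element of $\B'$ is not a multiple of $m$. Since $p\mid m$, every multiple of $m$ lies in the residue class $0 \bmod p$. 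As $\B'$ meets at least two residue classes modulo $p$, some element of $\B'$ is not divisible by $p$, hence not by $m$. Theorem~\ref{Ruzsa} then yields $\A_2\subset\B'$ with $\Sigma(\A_2)\equiv 0 \pmod p$ and $\Sigma(\A_2)\not\equiv 0\pmod q$.

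We set $\A=\A_1\cup\A_2$, a disjoint union because $\A_1\subset\mathcal{C}$ and $\A_2\subset\B\setminus\mathcal{C}$. Then
\[
a_0+\Sigma(\A)=\bigl(a_0+\Sigma(\A_1)\bigr)+\Sigma(\A_2)\equiv \Sigma(\A_2)\pmod q .
\]
Hence $a_0+\Sigma(\A)$ is $\equiv 0\pmod p$ and $\not\equiv 0 \pmod q$, which gives \eqref{wind} and \eqref{kuehl}.

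The only delicate points are bookkeeping. The count $5\lceil2\sqrt p\rceil+2=(\lceil 2\sqrt p\rceil+1)+(4\lceil 2\sqrt p\rceil+1)$ must leave enough residues for Theorem~\ref{Ruzsa} after Olson's step. The non-multiple-of-$m$ hypothesis must be verified, and this is exactly where $\ell>1$ (so $p\mid m$) is used. The main obstacle is already absorbed by Theorem~\ref{Ruzsa}, so the corollary should follow directly.
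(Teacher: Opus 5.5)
Your proof is correct and follows essentially the same argument as the paper: apply Olson's lemma to a block of $\lceil 2\sqrt p\rceil+1$ residues, apply Theorem~\ref{Ruzsa} to the rest of $\B$, and take the disjoint union. The one addition is your explicit check that $\B\setminus\mathcal{C}$ contains a non-multiple of $m=p^{\ell-1}$: it meets at least two residue classes modulo $p$, while every multiple of $m$ is $0 \bmod p$. The paper's proof simply builds this into its choice of $\A_1$ without justification, so your version fills that gap.
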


\subsection{Main results: application to problems on Hilbert cubes
in multiplicatively defined sets}

The main results of this paper are new upper bounds on the dimension of Hilbert cubes or subset sums in certain multiplicatively defined sets.

Let us first recall what is known in the cases of squares and primes:
When $\mathcal{S}$ is the set of squares, then it is known that
$\min(A(N), B(N))=O(\log N)$, see \cite{Gyarmati:2001}.
Also, the maximal dimension of a Hilbert cube in the set of squares ${\cal S}\cap [1,N]$ is
$d=O(\log\log N)$, see \cite{DietmannandElsholtz2}, improving
\cite{DietmannandElsholtz,HegyvariandSarkozy:1999}.

Let ${\cal P}$ denote the set of all primes.
If ${\cal A+B \subseteq P}$, then ${\cal A}(N){\cal B}(N)=O(N)$,
\cite{Wirsing, PSS}.
Moreover, for all $\varepsilon >0$, there exists a $k$ such that
when ${\cal A}(N) \gg (\log N)^k$, then
${\cal B}(N) \leq N^{\frac{1}{2}+ \varepsilon}$, see \cite{Elsholtz:1999}.
The maximal dimension of Hilbert cubes in the set of primes, in the finite
interval $[1,N]$ is $d=O(\frac{\log N}{\log \log N})$, see
Woods \cite{Woods} and Elsholtz \cite{Elsholtz:2003},
improving Hegyvari and S\'{a}rk\"ozy \cite{HegyvariandSarkozy:1999}.

In this paper we study maximal dimensions in sets such as the squareful numbers, pure powers, or the value set of binary quadratic forms.

Let ${\cal W}=\{m \in \N: p\mid n \text{ implies } p^2\mid n\}$ be the set of squareful numbers, and suppose that
${\cal A}+{\cal B}\subset {\cal W}$. There do not seem to be upper bounds on $|{\cal A}|, |{\cal B}|$ in the literature, but an upper bound of the dimension of Hilbert cubes in ${\cal W}\cap [1,N]$ is $O((\log N)^2)$ (see Theorem 1.5 in \cite{DietmannandElsholtz2}).
For subset sums in the set of pure powers $V \cap [1,N]$,
the bound $d=O\left(\frac{(\log \log N)^3}{\log \log \log N}\right)$ (see Theorem 1.7 in \cite{DietmannandElsholtz2}) is known. We improve on these last two results in Corollary \ref{cor:margaretenbad} and Theorem \ref{thm:pure-powers}.

These results have the following applications to the largest
possible dimension $d$
of Hilbert cubes in certain multiplicatively defined sets.
Let us consider $r$-full numbers, where the squareful numbers ($r=2$) are the most important case.
\begin{theorem}[Hilbert cubes in semigroups, motivated by norm forms]\label{densitytheorem}
Let $r \ge 2$ be a fixed integer and
let ${\cal T}$ denote a set of primes and $\tau$ a positive constant  with
\[
\sum_{p \in {\cal T},\, p \leq y} \frac{\log p}{\sqrt{p}}
\geq (\tau+o(1)) y^{\frac{1}{2}}.\]
Let $ {\cal S_{T}}= \{n \in \N_0: p\mid n \text{ and } p \in {\cal T}
\text{ implies } p^r \mid n \}$.
Let $F_1(N)$ denote the largest dimension $d$ such that
there exist a non-negative integer $a_0$ and positive integers
$a_1, \ldots, a_d$ with
$\mathcal{H}(a_0; a_1, \ldots, a_d) \subset \mathcal{S}_{\mathcal{T}}
\cap [1,N]$.
Then the following holds:
\[ F_1(N) \ll_{\mathcal \tau} \log N.\]
\end{theorem}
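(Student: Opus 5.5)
Suppose $\mathcal{H}=\mathcal{H}(a_0;a_1,\ldots,a_d)\subset \mathcal{S}_{\mathcal{T}}\cap[1,N]$. The plan is to use Corollary~\ref{schwarzwald} to show that, for every prime $p\in\mathcal{T}$ with $p\nmid a_1\cdots a_d$ (more precisely, with enough of the $a_i$ nonzero mod $p$), the set $\{a_1,\dots,a_d\}$ occupies fewer than $5\lceil 2\sqrt p\rceil+2\ll\sqrt p$ residue classes modulo $p$. Then Gallagher's larger sieve modulo $p$ (not $p^2$) bounds $d$.

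\textbf{Step 1 (local restriction).} Fix $p\in\mathcal{T}$ and take $q=p^r$. Every element $a_0+\sum(\A)$ of $\mathcal{H}$ is in $\mathcal{S}_{\mathcal{T}}$. Hence whenever it is $\equiv 0\pmod p$, it is $\equiv 0\pmod{p^r}$. So no $\A\subset\{a_1,\dots,a_d\}$ satisfies (\ref{wind}) and (\ref{kuehl}) simultaneously. By Corollary~\ref{schwarzwald}, applied to the reductions of the $a_i$ modulo $p^r$, the $a_i$ lie in at most $5\lceil2\sqrt p\rceil+1\le 11\sqrt p$ classes modulo $p$ (for $p\ge p_0$). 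The $a_i$ are distinct integers, since otherwise we could first discard repeats; repeated $a_i$ can only be handled by noting that a cube with $a_i=a_j$ still gives subsets sums. The cleanest route is to apply the corollary to distinct residues of the multiset, so I would check that repetitions do not matter. Only residues modulo $p$ count, and the corollary needs only a subset $\B$ with many distinct residues mod $p$. So I take one representative per residue class, which makes repetitions harmless.

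\textbf{Step 2 (larger sieve).} Let $\mathcal{D}=\{a_1,\dots,a_d\}$. I assume the $a_i$ are distinct; if not, multiplicities are at most bounded by an easy argument, or I pass to the set of distinct values, which has size $\gg d/\log N$ at worst. This is the first point I would double check. All $a_i\le N$. Gallagher's larger sieve with primes $p\in\mathcal{T}$, $p\le Q$, and $\nu(p)\le 11\sqrt p$ gives
\[
|\mathcal{D}|\le \frac{\sum_{p\le Q,\,p\in\mathcal{T}}\log p-\log N}{\sum_{p\le Q,\,p\in\mathcal{T}}\frac{\log p}{\nu(p)}-\log N}
\]
whenever the denominator is positive. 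By hypothesis $\sum_{p\in\mathcal{T},p\le Q}\frac{\log p}{11\sqrt p}\ge(\tau/11+o(1))Q^{1/2}$. Choosing $Q=C_\tau(\log N)^2$ makes the denominator $\gg_\tau \log N$, while the numerator is $\le\sum_{p\le Q}\log p\ll Q\ll_\tau(\log N)^2$. This yields $|\mathcal{D}|\ll_\tau \log N$, hence $F_1(N)\ll_\tau\log N$.

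\textbf{Main obstacle.} The delicate point is whether the reduction in Step 1 is legitimate. Corollary~\ref{schwarzwald} has no hypothesis on the $a_i$ modulo $p$, so it applies directly to residues mod $p^r$. The real care needed is in the sieve bookkeeping. The primes with $p\le p_0$ must be dropped, which does not affect the lower bound asymptotically. Distinctness of the $a_i$ is also needed. If some $a_i$ coincide, say value $a$ occurs $k$ times, then $a_0, a_0+a, \dots, a_0+ka$ all lie in $\mathcal{S}_{\mathcal{T}}$. I would bound $k$ by applying the same argument to that arithmetic progression: modulo a suitable $p\in\mathcal{T}$ with $p\nmid a$, $p\le k$, some term is $\equiv0\pmod p$ but not mod $p^2$, unless $k$ is small. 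Combining both bounds gives the theorem, possibly up to replacing $\log N$ by an equal order bound.
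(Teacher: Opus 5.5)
For distinct $a_i$, your Steps 1 and 2 are exactly the paper's proof. Corollary~\ref{schwarzwald} with $q=p^r$, applied to one representative per residue class, shows that the $a_i$ occupy at most $5\lceil 2\sqrt p\rceil+1$ classes mod $p$ for every $p\in\mathcal{T}$. The extra condition $p\nmid a_1\cdots a_d$ in your opening sentence is unnecessary and should be dropped, as you in effect do. Gallagher's larger sieve over $p\in\mathcal{T}$, $p\le C_\tau(\log N)^2$, then gives $d\ll_\tau\log N$. The paper does nothing more: it sieves the set $\mathcal{A}=\{a_1,\dots,a_d\}$ and never discusses repeated generators.

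\textbf{Where your proposal fails.} The treatment of repetitions, which you claim to settle, does not work.
\begin{itemize}
\item Passing to the distinct values, of size $\gg d/\log N$, only yields $d\ll(\log N)^2$.
\item Bounding every multiplicity by $O_\tau(\log N)$ and multiplying by the $O_\tau(\log N)$ distinct values also gives only $(\log N)^2$. So ``combining both bounds'' does not give $\log N$.
\item Your progression argument needs $k+1\ge 2p$, not $p\le k$. A single term $a_0+ja\equiv 0\pmod p$ may well be divisible by $p^r$. Only two such terms, which differ by $pa\not\equiv 0\pmod{p^2}$, force a contradiction.
\end{itemize}

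\textbf{A correct way to cover repeated $a_i$.} Lemmas~\ref{lemmaA}--\ref{lemmaC} and the proof of Proposition~\ref{keyprop} go through verbatim for indexed families whose members are all nonzero mod $p$. One uses subfamilies of indices instead of subsets, and takes $k=p-1$. This is allowed because, by Cauchy--Davenport, the subset sums (empty sum included) of any $p-1$ nonzero residues cover $\Z_p$. The resulting conclusion $pb\equiv 0\pmod{p^r}$ is impossible when $p\nmid b$, since $r\ge 2$. Arguing as in Corollary~\ref{schwarzwald}, for each $p\in\mathcal{T}$ at most $5(p-1)$ indices satisfy $p\nmid a_i$. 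Hence
\[
d\log N\ \ge\ \sum_{i=1}^d\log a_i\ \ge \sum_{p\in\mathcal{T},\,p\le d/10}(d-5p)\log p\ \ge\ \frac d2\sum_{p\in\mathcal{T},\,p\le d/10}\log p .
\]
Splitting the hypothesis sum at $(\tau/4)^2y$ gives $\sum_{p\in\mathcal{T},\,p\le y}\log p\ge(\tau^2/8+o(1))y$. Therefore $d\ll\tau^{-2}\log N$ with or without repetitions. This route needs neither Olson's lemma nor the larger sieve.
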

\begin{remark}
Applying partial summation to the prime number theorem 
$\pi(y)\sim \frac{y}{\log y}$ it follows that
$\sum_{p \leq y} \frac{\log p}{\sqrt{p}}\sim 2\sqrt{y}$ holds.
Hence the condition
\[
\sum_{p \in {\cal T},\  p \leq y} \frac{\log p}{\sqrt{p}}
\geq (2\tau+o(1)) y^{\frac{1}{2}}\]
is satisfied, for example, for sets of primes with a lower
density $\tau$ (relative
to the primes) with $0 < \tau \leq 1$, in particular for primes in arithmetic
progressions $ak+b$, with $\gcd(a,b)=1$.

\end{remark}
This theorem has a number of corollaries for concrete sets of arithmetic
interest.

As a corollary we improve a result on Hilbert cubes in the set of 
 squareful numbers; it follows immediately from
Theorem \ref{densitytheorem} by choosing $\cal{T}$ to be the set of all
primes.

\begin{corollary}[Hilbert cubes in squareful numbers]
\label{cor:margaretenbad}
Let $\mathcal{W}=\{n \in \N_0: p\mid n \text{ implies } p^2 \mid n \}$
denote the set of squareful integers.
Let $F_2(N)$ denote the largest dimension $d$ such that
there exist a non-negative integer $a_0$ and positive integers
$a_1, \ldots, a_d$ with
$\mathcal{H}(a_0; a_1, \ldots, a_d) \subset \mathcal{W}
\cap [1,N]$. Then the following holds:
\[ F_2(N) \ll (\log N).\]
\end{corollary}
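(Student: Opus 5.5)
The corollary is the special case of Theorem \ref{densitytheorem} with $r=2$ and $\mathcal{T}$ the set of all primes, so the plan is just to check that the hypotheses of that theorem hold.

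\emph{Identifying the sets.} With $\mathcal{T}$ the set of all primes and $r=2$, the definition
\[
\mathcal{S}_{\mathcal{T}}=\{n\in\N_0:\ p\mid n \text{ and } p\in\mathcal{T} \text{ implies } p^2\mid n\}
\]
coincides with $\mathcal{W}$. Hence $F_2(N)=F_1(N)$ for this choice of $\mathcal{T}$.

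\emph{Verifying the density hypothesis.} I would use the remark following Theorem \ref{densitytheorem}. Partial summation applied to the prime number theorem gives
\[
\sum_{p\le y}\frac{\log p}{\sqrt p}\sim 2\sqrt y .
\]
Concretely, write the sum as $\int_{2^-}^y t^{-1/2}\,d\theta(t)$ with $\theta(t)\sim t$. Integration by parts turns this into $\theta(y)y^{-1/2}+\tfrac12\int_2^y\theta(t)t^{-3/2}\,dt$, and this equals $(1+o(1))\sqrt y+(1+o(1))\sqrt y$. So the hypothesis of Theorem \ref{densitytheorem} holds with, say, $\tau=1$ (even $\tau=2$ would do).

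\emph{Conclusion.} Theorem \ref{densitytheorem} now yields $F_1(N)\ll_\tau \log N$. Since $\tau$ is an absolute constant here, this gives $F_2(N)\ll\log N$.

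There is no genuine obstacle; the only point needing care is confirming that the required lower bound is uniform in $y$ with only an $o(1)$ error. That follows directly from the prime number theorem (Chebyshev-type bounds would even suffice for some smaller $\tau>0$). All the real difficulty lies in the proof of Theorem \ref{densitytheorem} itself, via Corollary \ref{schwarzwald} and the sieve.
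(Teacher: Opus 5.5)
Your proposal is correct and matches the paper's argument: the paper obtains the corollary by specializing Theorem \ref{densitytheorem} to $r=2$ and $\mathcal{T}$ the set of all primes, and leaves the density hypothesis to the remark after that theorem. Your partial-summation check that $\sum_{p\le y}\log p/\sqrt{p}\sim 2\sqrt{y}$, so that $\tau=1$ or $\tau=2$ is admissible, supplies the detail the paper leaves implicit.
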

This improves on the bounds
\[ O(\log N)^3(\log \log N)^{-\frac{1}{2}})\]
proved by Gyarmati, S\'ark\"ozy and Stewart 
\cite{GyarmatiSarkozyandStewart:2003} in the special case of subset sums
and
\[ F_2(N) \ll (\log N)^2\]
proved by Dietmann and Elsholtz \cite{DietmannandElsholtz2}.
Gyarmati, S\'ark\"ozy and Stewart 
first provided local information, namely
that such a sequence can have 
integers in at most $p\sqrt{\log p}$ 
distinct residue classes modulo $p^2$ and converted the ``local knowledge''
modulo $p^2$  to 
a ``global'' upper bound. However,
this local bound is not sharp enough to enable one to use a sieve due to 
Gallagher, as we do below.
As for a lower bound, Gyarmati, S\'ark\"ozy and Stewart 
\cite{GyarmatiSarkozyandStewart:2003}
explicitly 
construct a set which shows 
that $F_2(N)\gg (\log N)^{\frac{1}{2}}$ holds.

The method by Dietmann and Elsholtz \cite{DietmannandElsholtz2} 
first used that the length of arithmetic progressions in the
squareful numbers $\mathcal{W} \cap [1,N]$
can be bounded by $O(\log N)$, 
and then an ``exponential growth'' of the size of Hilbert cubes in sets without long
arithmetic progressions.

We take the opportunity to also improve on the dimension of subset sum cubes in pure powers.
In this case the improvement comes from a recently improved result on sunflowers.
The result is 
\begin{theorem}[Subsetsums in pure powers]\label{thm:pure-powers}
Let $\mathcal{V}=\{a^n: a,n \in \N, a\geq 1, n\geq 2\}$
denote the set of pure powers.

Let $F_3(N)$ denote the largest dimension $d$ 
of a Hilbert cube
$\mathcal{H}(0; a_1, \ldots, a_d) \subset \mathcal{V}
\cap [1,N]$ with distinct $a_1, \ldots ,a_d$. Then the following holds:
\[ F_3(N) \ll (\log \log N)^2.\]
\end{theorem}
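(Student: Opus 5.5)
The plan follows the strategy of Dietmann and Elsholtz \cite{DietmannandElsholtz2} for Theorem 1.7 there. The only substantial change is that the sunflower lemma is replaced by the recent quantitatively improved version. That version says a family of more than $(C s\log(sk))^k$ sets, each of size $k$, contains a sunflower with $s$ petals (Alweiss--Lovett--Wu--Zhang and refinements). The approach is a colouring one: $d$ is too large, so some structured sub-configuration of the cube lies inside the $n$-th powers for one fixed exponent $n$, and this contradicts the known bounds for fixed exponents.

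First I reduce the colouring. Every element of $\mathcal V\cap[1,N]$ is of the form $x^n$ with $n$ prime and $n\le \log_2 N$. For each nonempty $I\subseteq\{1,\dots,d\}$, the sum $\sum_{i\in I}a_i$ therefore gets a colour $n(I)$ from a set of at most $\pi(\log_2 N)\ll \log N/\log\log N$ primes.

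Next I fix a size $k$ and consider the $\binom{d}{k}$ subsets of size $k$. By pigeonhole, some exponent $n$ has a colour class $\mathcal F$ with $|\mathcal F|\ge \binom dk/\log N$. Take $k\asymp \log\log N$ and $d=C'(\log\log N)^2$ with $C'$ large. Then $\binom dk\ge (d/k)^k=(C'\log\log N)^k$, which beats both $\log N=e^{\log\log N}$ and $(Cs\log(sk))^k$ with $s\asymp\log\log N$. The improved sunflower lemma then gives a core $K$ and pairwise disjoint petals $P_1,\dots,P_s$ such that every $K\cup P_j$ lies in $\mathcal F$. This is exactly where the improvement enters: the old bound $k!\,(s-1)^k$ only allows $d\asymp(\log\log N)^3/\log\log\log N$. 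Writing $a_0'=\sum_{i\in K}a_i$ and $b_j=\sum_{i\in P_j}a_i$, the $b_j$ are distinct positive integers and $a_0'+b_j$ is a perfect $n$-th power for all $j\le s$.

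The main obstacle is converting this one-dimensional, monochromatic "star" into a contradiction. I see two routes and would try the first one first.

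\emph{Route 1: iterate the sunflower step inside the petals.} Restrict to unions of petals and apply the same argument again, so as to build a monochromatic Hilbert cube $\mathcal H(a_0';c_1,\dots,c_t)$ of dimension $t\gg\log\log N$ inside the $n$-th powers. This would contradict the $O(\log\log N)$ bound for Hilbert cubes in squares \cite{DietmannandElsholtz2}, together with its analogue for $n$-th powers with $n\ge3$. The difficulty is that the colour of a union of petals need not equal $n$, so at each stage the bookkeeping must lose only a constant factor in the dimension.

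\emph{Route 2: use the star directly.} Counting $n$-th powers in the short interval $(a_0',\,a_0'+\max_j b_j]$ gives only about $N^{1/n}$ of them, so this route has to be played off against the many subset sums of the $b_j$ instead. Concretely, the sums $a_0'+\sum_{j\in J}b_j$ must also be pure powers. A second pigeonhole over their exponents, combined with the fact that $\mathcal H(0;b_1,\dots,b_s)$ has $\gg 2^{s}$ distinct values once the $b_j$ are distinct and $s$ is large, should force more perfect powers below $N$ than exist when $s\gg\log\log N$.

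The choice of parameters ($k,s\asymp\log\log N$ and $d\asymp(\log\log N)^2$) is designed so that whichever route closes the argument yields $F_3(N)\ll(\log\log N)^2$.
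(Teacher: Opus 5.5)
\textbf{There is a genuine gap.} Your argument never reaches a contradiction, because the sunflower lemma is applied to the wrong family.

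Applied to an exponent colour class $\mathcal F$, the lemma gives petals $P_j$ whose sums $b_j$ are unrelated. They need not even be distinct, contrary to what you assert: nothing stops two petals of equal size from having equal sums. The resulting star ``$a_0'+b_j$ is an $n$-th power for $j\le s$'' is no obstruction at all, and neither route repairs this.
\begin{itemize}
\item Route 1 is only a hope. You note yourself that unions of petals need not have colour $n$, and no mechanism is given to control this.
\item Route 2 fails outright. Distinct $b_j$ do not give $\gg 2^s$ distinct subset sums: $b_j=j$ gives only $O(s^2)$. Even $2^s=(\log N)^{O(1)}$ values would be negligible against the $\asymp\sqrt N$ pure powers up to $N$, so no counting contradiction is available.
\item Your parameter check is also off. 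With $k\asymp\log\log N$ one has $\log(sk)\asymp\log\log\log N$. So for $s\asymp\log\log N$ the threshold $(Cs\log(sk))^k$ is \emph{not} beaten by $(C'\log\log N)^k$ for any fixed $C'$. You would need $s\ll\log\log N/\log\log\log N$.
\end{itemize}

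\textbf{The missing idea, which is what the paper does,} is to apply the sunflower lemma to the family of $h$-subsets of $\{a_1,\dots,a_d\}$ that all have the \emph{same} sum. Discarding the kernel leaves $v$ nonempty disjoint petals with a common sum $s$. Unions of $t$ petals ($1\le t\le v$) then put $s,2s,\dots,vs$ into the subset-sum cube, hence into $\mathcal V$.

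Since $\mathcal V\cap[1,N]$ contains no homogeneous arithmetic progression of length $v\asymp\log\log N/\log\log\log N$ (Lemma 4.8 of \cite{DietmannandElsholtz2}), the improved sunflower bound gives the multiplicity bound $g(h,N)\le(Cv\log h)^h$. Feed this into the counting step of \cite{DietmannandElsholtz2}:
\[
d\le(5h!\,f(N)\,g(h,N))^{1/h}+5h+4, \qquad f(N)\le(\log N)^M .
\]
Taking $h=\lfloor\log\log N\rfloor$ gives $d\ll h\cdot v\log h\ll(\log\log N)^2$.

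The $\log\log\log N$ saving in the progression bound is exactly what cancels the $\log h$ in the sunflower bound. No colouring by exponents is needed.
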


Another application of Theorem 
\ref{densitytheorem} is to Hilbert cubes in the value set of binary quadratic
forms, or more generally norm forms. Here it is known that there is an
`inert'
set of primes with positive asymptotic density (see Lemma 3.1 in
\cite{ElsholtzandFrei}).

\begin{corollary}[Hilbert cubes in binary quadratic forms]\label{cor:sumsof2squares}
Let $a, b, c \in \Z$ such that
$a x^2 + bxy + cy^2$ is an irreducible positive definite binary quadratic form,
and let ${\cal S}_{a,b,c}=\{ax^2+bxy+cy^2: x,y \in \Z\}$.
Let $F_{a,b,c}(N)$ denote the largest dimension $d$ such that
there exist a non-negative integer $a_0$ and positive integers
$a_1, \ldots, a_d$ with
$\mathcal{H}(a_0; a_1, \ldots, a_d) \subset \mathcal{S}_{a,b,c} \cap [1,N]$.
Then the following holds:
\[ F_{a,b,c}(N) \ll_{a,b,c} \log N.\]
\end{corollary}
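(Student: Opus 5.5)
The plan is to deduce the statement directly from Theorem~\ref{densitytheorem} with $r=2$. For this, I would choose $\mathcal{T}$ to be a set of ``inert'' primes for the form $Q(x,y)=ax^2+bxy+cy^2$ and check that $\mathcal{S}_{a,b,c}\subseteq \mathcal{S}_{\mathcal{T}}$. Once this inclusion holds, any Hilbert cube $\mathcal{H}(a_0;a_1,\ldots,a_d)\subset \mathcal{S}_{a,b,c}\cap[1,N]$ is also a Hilbert cube in $\mathcal{S}_{\mathcal{T}}\cap[1,N]$. Hence $F_{a,b,c}(N)\le F_1(N)\ll_\tau \log N$, and $\tau$ will depend only on $a,b,c$.

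Let $D=b^2-4ac$. Since the form is irreducible and positive definite, $D<0$ and $D$ is not a perfect square. Let $\mathcal{T}$ be the set of odd primes $p\nmid aD$ with Legendre symbol $\left(\frac{D}{p}\right)=-1$.

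For $p\in\mathcal{T}$ and $p\mid Q(x,y)$, I would use the identity
\[
4aQ(x,y)=(2ax+by)^2-Dy^2 .
\]
It gives $(2ax+by)^2\equiv Dy^2 \pmod p$. If $p\nmid y$, then $D$ would be a square modulo $p$, which is a contradiction. Hence $p\mid y$, then $p\mid 2ax$, and since $p\nmid 2a$ we get $p\mid x$. Therefore $p^2\mid Q(x,y)$. This shows that every value of $Q$ lies in $\mathcal{S}_{\mathcal{T}}$ with $r=2$; the value $0$ is harmless, as $\mathcal{S}_{\mathcal{T}}\subset\N_0$ contains it.

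It remains to verify the density hypothesis of Theorem~\ref{densitytheorem}. Because $D$ is not a square, $p\mapsto\left(\frac{D}{p}\right)$ agrees, for $p\nmid 2D$, with a non-principal real Dirichlet character modulo $4|D|$ (quadratic reciprocity). By the prime number theorem for arithmetic progressions, the primes with $\left(\frac{D}{p}\right)=-1$ therefore have relative density $1/2$, and excluding the finitely many $p\mid 2aD$ does not change this; alternatively one can quote Lemma~3.1 of \cite{ElsholtzandFrei}. By the Remark following Theorem~\ref{densitytheorem}, partial summation then gives
\[
\sum_{p\in\mathcal{T},\,p\le y}\frac{\log p}{\sqrt p}\ge(\tau+o(1))\sqrt y
\]
with a positive constant $\tau$ (for instance $\tau=1$).

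The only step needing care is this density verification, specifically checking that the inert set is genuinely of positive density, which uses the irreducibility of the form. The remaining steps are the algebraic identity above and the monotonicity $F_{a,b,c}(N)\le F_1(N)$.
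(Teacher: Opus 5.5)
Your proposal is correct and follows essentially the same route as the paper: take the primes with $\left(\frac{D}{p}\right)=-1$, show that $p\mid Q(x,y)$ forces $p\mid x$ and $p\mid y$ (you complete the square via $4aQ=(2ax+by)^2-Dy^2$, where the paper dehomogenizes with $w=x\overline{y}$), and then apply Theorem~\ref{densitytheorem} with $r=2$. Your explicit check of the density hypothesis via the non-principal character $\left(\frac{D}{\cdot}\right)$, which gives $\tau=1$, fills in a step the paper only asserts.
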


This covers in particular the case
of ``sums of two squares'' ${\cal S}_2=\{x^2+y^2: x,y \in \N\}$. This set is 
obviously additively decomposable into 
${\cal S}_2 = \{x^2: x\in \N \}+ \{y^2:y \in \N\}$,
and the Hilbert cubes in the squares in $[1,N]$ have a maximal dimension
$d=O\left( (\log \log N)\right)$. 
This however does not automatically imply any upper bound
on the maximal size of Hilbert cubes in $S_2$, as 
there could possibly be another entirely different 
decomposition ${\cal S}_2={\cal A}+{\cal B}$. In this situation, the 
fact that  $p\equiv 3 \mod 4, p\mid n \in {\cal S}_2$ implies that $p^2 \mid n$
immediately gives Corollary \ref{cor:sumsof2squares}.



\begin{theorem}[Hilbert cubes in multiplicative semigroups]{\label{2nddensitytheorem}}
Let $0< \tau\leq 1$ and  ${\cal T}$ denote a set of primes with
\[
\sum_{p \not\in {\cal T},\ p \leq y} \frac{\log p}{\sqrt{p}}
\gg_{\tau} y^{\frac{1}{2}}.\]
(In particular, a set of primes with density $\tau$ with $ 0< \tau <1$
relative to the primes satisfies this condition.)
Let $ {\cal S}= \{n \in \N: p\mid n \Rightarrow p \in {\cal T} \}$.

Let $F_4(N)$ denote the largest dimension $d$ such that
there exist a non-negative integer $a_0$ and positive integers
$a_1, \ldots, a_d$ with
$\mathcal{H}(a_0; a_1, \ldots, a_d) \subset \mathcal{S} \cap [1,N]$.
Then the following holds:
\[ F_4(N) \ll_{\mathcal{T}} \log N.\]
\end{theorem}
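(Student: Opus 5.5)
We prove Theorem \ref{2nddensitytheorem} with Gallagher's larger sieve, applied modulo the primes $p \notin {\cal T}$. No square conditions are needed here, because the local information is already available modulo $p$ and Lemma \ref{olson} suffices.

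Suppose $\mathcal{H}(a_0;a_1,\ldots,a_d)\subset \mathcal{S}\cap[1,N]$. We may assume the $a_i$ are distinct; if an element occurs with multiplicity, we pass to the distinct values, which only loses a constant factor in $d$ after the usual reduction. Set $\B=\{a_1,\ldots,a_d\}\subset[1,N]$.

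\textbf{Local bound.} Fix a prime $p\notin{\cal T}$ with $p\le N$. Every element of the cube lies in $\mathcal{S}$ and is therefore not divisible by $p$. Now suppose the reduction of $\B$ modulo $p$ contained more than $2\sqrt p$ distinct nonzero residues. Pick one representative for each such residue. By Lemma \ref{olson}, applied with target $-a_0$, some nonempty subset $\A$ of these representatives satisfies $a_0+\Sigma(\A)\equiv 0\pmod p$. But $a_0+\Sigma(\A)$ lies in the cube, so $p$ divides an element of $\mathcal{S}$, a contradiction. Counting the zero class as well, $\B$ meets at most $2\sqrt p+1\le 3\sqrt p$ residue classes modulo $p$.

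\textbf{Gallagher's sieve.} Gallagher's larger sieve, for a set $\B\subset[1,N]$ occupying $\nu(p)$ classes modulo $p$ for primes $p\in\mathcal{P}'$ with $p\le Q$, gives
\[
|\B|\le \frac{\sum_{p\in\mathcal{P}',\,p\le Q}\log p-\log N}{\sum_{p\in\mathcal{P}',\,p\le Q}\frac{\log p}{\nu(p)}-\log N},
\]
provided the denominator is positive. Take $\mathcal{P}'$ to be the primes not in ${\cal T}$ and $\nu(p)\le 3\sqrt p$. The hypothesis gives
\[
\sum_{p\notin{\cal T},\,p\le Q}\frac{\log p}{\nu(p)}\ge \frac13\sum_{p\notin{\cal T},\,p\le Q}\frac{\log p}{\sqrt p}\ge c_\tau\sqrt Q .
\]
Choose $Q=C(\log N)^2$ with $C$ large enough that $c_\tau\sqrt Q\ge 2\log N$. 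Then the denominator is at least $\log N$. By Chebyshev the numerator is at most $\theta(Q)\ll(\log N)^2$. Hence $d=|\B|\ll_\tau \log N$.

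\textbf{Main obstacle.} The delicate points are applying Lemma \ref{olson} correctly and checking that the constants in the sieve balance.
\begin{itemize}
\item Lemma \ref{olson} needs distinct residues, so we use only one representative per residue class. It also produces a nonempty subset, which is harmless: the element $a_0$ itself, coming from the empty subset, is also required to be prime to $p$.
\item The hypothesis is only a $\gg$ lower bound. We need it to hold for all large $Q$, so that $\sqrt Q$ dominates $\log N$. This holds by the choice $Q\asymp(\log N)^2$.
\item Small $N$ is trivial.
\end{itemize}
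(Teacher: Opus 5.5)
You follow the paper's route: Lemma \ref{olson} modulo each $p\notin\mathcal{T}$ gives $\nu(p)\le 2\sqrt p+1$, and Lemma \ref{lem:Gallagher} over $p\notin\mathcal{T}$, $p\le Q\asymp(\log N)^2$, then gives $\ll_{\mathcal{T}}\log N$. For distinct $a_i$ this is correct. Your use of the target $-a_0$ with one representative per class is cleaner than the paper's sketch.

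The genuine gap is your opening reduction to distinct $a_i$. The theorem does not assume the $a_i$ are distinct (contrast Theorem \ref{thm:pure-powers}), and the sieve only bounds the number of distinct values $|\B|$. Passing to distinct values can lose far more than a constant factor. If $a_1=\dots=a_d=a$, then $|\B|=1$ while $d$ is unconstrained by the sieve. Nor can any reduction to a sub-cube with distinct generators lose only a constant factor. Such a sub-cube of $\{a_0+ja:0\le j\le d\}$ has generators $c_1a,\dots,c_ka$ with distinct $c_i\ge 1$ and $\sum c_i\le d$, so $k<\sqrt{2d}$. Bounding multiplicities separately also falls short. A value $a$ of multiplicity $m$ forces $p\mid a$ for every $p\notin\mathcal{T}$ with $p\le m+1$, hence $m\ll\log N$. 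Combined with $|\B|\ll\log N$, this only gives $d\ll(\log N)^2$. The paper's proof has the same silent omission, since it treats $\A$ as a set.

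Here is a repair that handles multisets directly. For nonzero $x_1,\dots,x_{p-1}\in\Z_p$, Cauchy--Davenport gives $\{0,x_1\}+\dots+\{0,x_{p-1}\}=\Z_p$. So for $p\notin\mathcal{T}$, at most $p-2$ of the $a_i$ (counted with multiplicity) are prime to $p$. Otherwise some $a_0+\sum_{i\in I}a_i$, with $I$ possibly empty, is divisible by $p$. Hence each $p\notin\mathcal{T}$ with $p\le d/2$ divides more than $d/2$ of the $a_i$, and
\[
d\log N\ \ge\ \sum_{i=1}^{d}\log a_i\ \ge \sum_{p\notin\mathcal{T},\,p\le d/2}\log p\cdot\#\{i:\,p\mid a_i\}\ \ge\ \frac{d}{2}\,\theta_{\mathcal{T}^{c}}(d/2),
\]
where $\theta_{\mathcal{T}^{c}}(y)=\sum_{p\notin\mathcal{T},\,p\le y}\log p$. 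Split the hypothesis sum at $y/K$ for a suitable constant $K$, bounding the part below $y/K$ by $O(\sqrt{y/K})$. This gives $\theta_{\mathcal{T}^{c}}(y)\gg_{\mathcal{T}}y$ for large $y$, so $d\ll_{\mathcal{T}}\log N$. This argument in fact proves the theorem on its own, without Olson's lemma or the larger sieve.
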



\subsection{Gallagher's larger sieve modulo powers}

Let us state Gallagher's larger sieve first.

\begin{lemma}[See \cite{Gallagher:1971}]{\label{lem:Gallagher}}
Let ${\cal{P'}}$ denote a set of primes or powers of primes such that 
${\cal{A}}$
lies modulo $p^i$ (for $p^i \in {\cal{P'}}$) in at most $\nu(p^i)$
residue classes. Then the following bound holds for all $N>1$,
provided the denominator is positive:
\[ \A(N) \leq \dfrac{ - \log N + \sum_{p^i \in {\cal{P'}}} \log p}{- \log N
+ \sum_{p^i \in {\cal{P'}}} \dfrac{ \log p}{\nu(p^i)} }.
\]
\end{lemma}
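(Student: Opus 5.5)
The plan is to follow Gallagher's original argument, a second-moment (Cauchy--Schwarz) count of pairs $a,a' \in \A\cap[1,N]$ with $a\equiv a' \pmod{p^i}$, with the moduli $p^i\in{\cal P'}$ playing the role of the primes. Write $A=\A(N)$ and, for $p^i\in{\cal P'}$ and a residue class $r$, let $Z(p^i,r)=\#\{a\in\A\cap[1,N]: a\equiv r \bmod p^i\}$. Since $\A$ occupies at most $\nu(p^i)$ classes modulo $p^i$, Cauchy--Schwarz gives
\[
\sum_{r \bmod p^i} Z(p^i,r)^2 \ \ge\ \frac{A^2}{\nu(p^i)}.
\]

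Next I multiply by $\log p$ and sum over $p^i\in{\cal P'}$. Interpreting the left side as a count of ordered pairs gives
\[
\sum_{p^i\in{\cal P'}}\log p\sum_{r}Z(p^i,r)^2=\sum_{a,a'}\ \sum_{\substack{p^i\in{\cal P'}\\ p^i\mid a-a'}}\log p .
\]
The diagonal pairs $a=a'$ contribute $A\sum_{p^i\in{\cal P'}}\log p$. For $a\neq a'$ the inner sum should be at most $\log|a-a'|\le\log N$, since $\sum_{p^i\mid n}\log p$ over \emph{all} prime powers dividing $n$ equals $\log n$ (that is, $\Lambda*1=\log$). This is the step I expect to be the only real subtlety. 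If ${\cal P'}$ contains several powers of the same prime $p$, each power dividing $a-a'$ must contribute only $\log p$, not $i\log p$, and then the bound by the von Mangoldt identity holds because ${\cal P'}$ is a subset of all prime powers and we sum $\Lambda(p^i)=\log p$. So the weight $\log p$ in the statement is exactly the right one, and I will state this explicitly. The off-diagonal contribution is therefore at most $(A^2-A)\log N$.

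Combining the two steps gives
\[
A^2\sum_{p^i}\frac{\log p}{\nu(p^i)}\le A\sum_{p^i}\log p+(A^2-A)\log N .
\]
Dividing by $A$ (the case $A=0$ is trivial) and rearranging yields
\[
A\Bigl(-\log N+\sum\frac{\log p}{\nu(p^i)}\Bigr)\le -\log N+\sum\log p .
\]
When the denominator is positive this is the claimed inequality. Finally, I should note that only moduli $p^i\le N$ matter for nontrivial divisibility of the differences, but the identity argument above does not even require that restriction.
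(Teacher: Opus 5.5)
Your proof is correct. The Cauchy--Schwarz bound $\sum_r Z(p^i,r)^2\ge A^2/\nu(p^i)$, the diagonal/off-diagonal split, and the estimate of the off-diagonal weight by $\sum_{d\mid a-a'}\Lambda(d)=\log|a-a'|<\log N$ together give exactly the stated inequality. The paper does not prove this lemma and only cites Gallagher; your argument is precisely Gallagher's standard larger-sieve proof, including the von Mangoldt weighting that makes prime-power moduli work.
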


Gallagher's larger sieve is a very useful tool, when studying the size
of sequences which are in only a few residue classes modulo primes.
For the classical question of modelling the set of squares, 
by observing that it is modulo odd primes $p$ in $\frac{p+1}{2}$ residue
classes it gives the (almost) best possible result that there are $O(\sqrt{N})$ squares
$a^2\leq N$. The large sieve gives the same type of an upper bound.
For the set of squares this result is of course trivial, but the point is that
the same upper bound holds for any sequence which is modulo primes
in at most $\frac{p+1}{2}$ residue classes, and that such a general bound is best
possible in the case of the squares (or more generally for the values of quadratic polynomials).

Studying the related question of a sequence $\mathcal{S}$,
that avoids half of the residue classes modulo
prime squares $p^2$ one observes that the best one can hope for
by the large sieve is an upper bound 
of $\mathcal{S}(N) =O(N^{3/4})$ (compare Selberg \cite{Selberg:1977},
Erd\H{o}s and S\'{a}rk\"ozy \cite{ErdosandSarkozy}).
Unfortunately Gallagher's larger sieve is of no use in this situation.
In the expression $\sum_{p \leq y}\frac{\log p}{\nu(p^2)}$ one would rather need
an estimate such as $\nu(p^2)\ll p$ to make the denominator positive.

In this paper we show that Gallagher's larger sieve can also be
used modulo composite integers, if extra information on the
distribution is available. In this application,
not only the number of residue classes modulo
$p^2$ is needed, but also the distribution modulo $p$ is made use of.

\ifshowsection
\subsection{Refinements}
In the case of Hilbert cubes in the primes Alan Woods 
\cite{Woods}
and the second author of this paper \cite{Elsholtz:2003}
independently observed that an improvement of the upper bound from $O(\log N)$ to 
$\frac{c\log N}{\log \log N}$ is possible. Woods obtained the constant $\frac{9}{2}$.
The idea behind this is as follows: in many sieve applications the worst case is when the sequence is equidistributed in those residue classes modulo primes, which are actually used. But here a refinement of Olson's lemma and a weighted form of Gallagher's larger sieve shows that the worst case distribution is when the
frequency of residue classes modulo a prime is linearly increasing, i.e.~is non-constant.
The refinement of the sieve is variant 4 in Croot and Elsholtz \cite{Croot-Elsholtz-sieve}
\[ \vert {\cal B}\vert  \leq 
\frac{- \log N +\sum_{p \leq y} \log p}{- \log N
+ \frac{1}{\vert {\cal A}\vert^2} \sum_{p \leq y} \log p \sum_{h=1}^p
Z(p,h)^2},  \]
where $Z(p,h)= \vert \{ b \in {\cal B}: b \equiv h \bmod p \}\vert.$

Let us assume that $c_1, \ldots , c_{\nu(p)}$ is a set of
representatives of the multiset ${\cal B} \bmod p$.
The class $c_i$ occurs $t_i=Z(p,i)$ times.
Without loss of generality we can assume that 
$t_1 \geq t_2 \geq \cdots \geq t_{\nu(p)}$.
Obviously $\sum_{i=1}^{\nu(p)} t_i = \vert {\cal B}\vert$.
In a first step we show that, using a half of the residue classes,
the sum set of this half represents at least $\frac{p+3}{2}$ residue classes.
The same holds for the other half.
It then follows by the Cauchy-Davenport theorem
that the sumset of both halves together represents all classes modulo
$p$.
Moreover we can half the halves so that the quarter has the additional
property that $b_i \not\equiv - b_j \bmod p$
for all $i \not=j$.

Such results can be found in Theorem of Balandraud
\cite{Balandraud:2012}.
\fi

\section{Proofs}
\subsection{Proof of Theorem \ref{Ruzsa} and Corollary \ref{schwarzwald}}


\begin{proposition}
\label{keyprop}
Assumptions:\\
Let $p$ be a prime and $k$ be a positive integer with the following
property: The subsums of every $k$-subset of $\Z_p$ contain
every element of $\Z_p$. Moreover, let $q=mp$ for some $m>1$, and
let $\B\subset \Z_q$ be a set such that 
$\left|\B \right|>4k$, the elements of $\B$
are all distinct modulo $p$ and they are not all multiples of $m=q/p$. 

Then there exists a subset $\A\subset \B$
with $\Sigma(\A)\equiv 0 \pmod{ p}$ and $\Sigma(\A)\not  \equiv 0
\pmod{ q}$.
\end{proposition}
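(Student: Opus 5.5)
I will argue by contradiction, assuming that every subset $\A\subset\B$ with $\Sigma(\A)\equiv 0 \pmod p$ also has $\Sigma(\A)\equiv 0\pmod q$. The goal is to show that this forces every element of $\B$ to be a multiple of $m$.

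Fix $b_0\in\B$ which is not a multiple of $m$. Since $|\B|>4k$, the set $\B\setminus\{b_0\}$ has at least $4k$ elements, so I can choose four pairwise disjoint $k$-subsets $C_1,C_2,C_3,C_4\subset \B\setminus\{b_0\}$. Their elements are distinct modulo $p$, so each $C_i \bmod p$ is a $k$-subset of $\Z_p$. By the hypothesis on $k$, for every $s\in\Z_p$ and every $i$ there is a subset $X\subset C_i$ with $\Sigma(X)\equiv s\pmod p$. Two small points need attention here: $s=0$ can be taken care of by $X=\emptyset$ (or by the convention for empty sums), and the element $b_0$ itself is treated as a singleton subset.

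The core of the argument is a well-defined map $\Phi:\Z_p\to\Z_q$ built from these subsets.

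\textbf{Definition of $\Phi$.} For $s\in\Z_p$, let $\Phi(s)=\Sigma(X) \bmod q$ for any $X\subset C_i$ with $\Sigma(X)\equiv s\pmod p$.

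\textbf{Well-definedness.} Take $X\subset C_i$ and $Y\subset C_j$ with $i\ne j$ and the same residue $s$ modulo $p$. Choose $Z$ in a third block $C_l$ with $\Sigma(Z)\equiv -s\pmod p$. Then $X\cup Z$ and $Y\cup Z$ are disjoint unions with sum $\equiv 0\pmod p$. By the contrary assumption both sums are $\equiv 0\pmod q$, so $\Sigma(X)\equiv\Sigma(Y)\equiv-\Sigma(Z)\pmod q$. If $X$ and $Y$ lie in the same block, compare each of them with a set from another block.

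\textbf{Additivity.} Take $X\subset C_1$ with residue $s$, $Y\subset C_2$ with residue $t$, and $Z\subset C_3$ with residue $-(s+t)$. The union $X\cup Y\cup Z$ has sum $\equiv 0\pmod p$, hence $\equiv 0\pmod q$. This gives $\Phi(s)+\Phi(t)=-\Phi(-(s+t))$. Using $C_4$ in the same way shows $\Phi(-u)=-\Phi(u)$. Together, $\Phi(s+t)=\Phi(s)+\Phi(t)$, so $\Phi$ is a group homomorphism $\Z_p\to\Z_q$.

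\textbf{Conclusion.} Since $\Phi$ is a homomorphism, $p\,\Phi(1)=\Phi(0)=0$ in $\Z_q$, so $\Phi(1)$, and with it the whole image of $\Phi$, consists of multiples of $m$. Now let $r\equiv b_0\pmod p$ and choose $Z\subset C_1$ with $\Sigma(Z)\equiv -r\pmod p$. The set $\{b_0\}\cup Z$ has sum $\equiv 0\pmod p$, hence $\equiv 0\pmod q$. Therefore $b_0\equiv-\Phi(-r)=\Phi(r)\pmod q$, which is a multiple of $m$. This contradicts the choice of $b_0$.

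The main obstacle is the bookkeeping that keeps the combined subsets disjoint, since only disjoint unions are subsets of $\B$ whose sums are additive. The reason for using four blocks is so that, in each of the identities above, the sets being combined lie in different blocks. If four blocks turn out not to be enough at some step, I will check that step carefully, for instance by routing the well-definedness comparison through whichever of the blocks are unused.
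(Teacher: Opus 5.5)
Your proof is correct and follows the paper's strategy: assuming no such $\A$ exists, you build the forced map $\Z_p \to \Z_q$, show it is a homomorphism, and conclude that its image, and hence $b_0$, consists of multiples of $m$. The only difference is bookkeeping. The paper defines $f$ on all subsets of size at most $k$ and secures disjointness through the counting Lemma \ref{lemmaA}: a union of size at most $|\B|-k$ leaves at least $k$ free elements for a complementing set. You instead fix disjoint blocks in advance, and in fact three blocks plus $b_0$, i.e.\ $|\B|\ge 3k+1$, already suffice for your argument.
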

\begin{proof}[Proof of Theorem \ref{Ruzsa}]
The theorem follows from Proposition \ref{keyprop} by
putting $k=\lceil 2 \sqrt{p} \rceil$, which is a permissible choice 
by Lemma \ref{olson}.
\end{proof}
We divide the proof of Proposition \ref{keyprop} into several key lemmata.
\begin{lemma}
\label{lemmaA}
Keep the assumptions from Proposition \ref{keyprop},
and in addition suppose that there is no subset $\A \subset \B$
with $\Sigma(\A) \equiv 0 \pmod p$ and $\Sigma(\A) \not \equiv 0 \pmod q$.
If $\mathcal{V}_1, \mathcal{V}_2 \subset
\B$ with $\Sigma (\mathcal{V}_1) \equiv \Sigma (\mathcal{V}_2) \pmod p$ but
$\Sigma (\mathcal{V}_1) \not \equiv \Sigma (\mathcal{V}_2) \pmod q$, then
$|\mathcal{V}_1 \cup \mathcal{V}_2| > |\B|-k>3k$.
\end{lemma}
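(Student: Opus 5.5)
Suppose for contradiction that $|\mathcal{V}_1\cup\mathcal{V}_2|\le |\B|-k$. Then the complement $\mathcal{C}=\B\setminus(\mathcal{V}_1\cup\mathcal{V}_2)$ has at least $k$ elements. The plan is to use $\mathcal{C}$ to correct the residue modulo $p$ of the two sets simultaneously. This yields two subsets whose sums are $\equiv 0 \pmod p$ but differ modulo $q$, so at least one of them is a forbidden subset.

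First I reduce to disjoint sets. Put $\mathcal{W}_i=\mathcal{V}_i\setminus(\mathcal{V}_1\cap\mathcal{V}_2)$. Removing the common part from both sets changes both sums by the same amount. Hence $\Sigma(\mathcal{W}_1)\equiv\Sigma(\mathcal{W}_2)\pmod p$ and $\Sigma(\mathcal{W}_1)\not\equiv\Sigma(\mathcal{W}_2)\pmod q$ still hold. Also $\mathcal{W}_1\cup\mathcal{W}_2\subset\mathcal{V}_1\cup\mathcal{V}_2$, so $\mathcal{C}$ is disjoint from both $\mathcal{W}_i$. In fact I do not even need disjointness of $\mathcal{W}_1$ and $\mathcal{W}_2$. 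I only need $\mathcal{C}$ to be disjoint from $\mathcal{V}_1\cup\mathcal{V}_2$, and that holds by definition, so I can work with $\mathcal{V}_1,\mathcal{V}_2$ directly.

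Next I pick a $k$-subset $\mathcal{C}'\subset\mathcal{C}$. The elements of $\B$ are distinct modulo $p$, so the reduction of $\mathcal{C}'$ modulo $p$ is a $k$-subset of $\Z_p$. By the hypothesis on $k$ in Proposition \ref{keyprop}, its subsums cover all of $\Z_p$. So there is some $\mathcal{D}\subset\mathcal{C}'$ with $\Sigma(\mathcal{D})\equiv-\Sigma(\mathcal{V}_1)\pmod p$. The subsums include the empty sum, and if the target is $0$ we may take $\mathcal{D}=\emptyset$. Now set $\A_i=\mathcal{V}_i\cup\mathcal{D}$, which is a disjoint union. Then $\Sigma(\A_i)=\Sigma(\mathcal{V}_i)+\Sigma(\mathcal{D})$, so both $\Sigma(\A_1)$ and $\Sigma(\A_2)$ are $\equiv 0\pmod p$. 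Their difference is $\Sigma(\mathcal{V}_1)-\Sigma(\mathcal{V}_2)\not\equiv 0\pmod q$. So they cannot both be $\equiv 0\pmod q$, and one of $\A_1,\A_2$ is a subset of $\B$ whose sum is $\equiv 0\pmod p$ but $\not\equiv0\pmod q$. This contradicts the added assumption. Therefore $|\mathcal{V}_1\cup\mathcal{V}_2|>|\B|-k$, and $|\B|>4k$ gives $|\B|-k>3k$.

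The only delicate point is the disjointness of $\mathcal{D}$ from $\mathcal{V}_1\cup\mathcal{V}_2$. This is exactly why the complement must have at least $k$ elements, and it is what makes the sums add. There is also a minor question of whether the covering hypothesis includes the empty subset when the target residue is $0$, but the case $\mathcal{D}=\emptyset$ handles this trivially.
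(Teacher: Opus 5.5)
Your proof is correct and follows the paper's argument essentially verbatim: assuming $|\B\setminus(\mathcal{V}_1\cup\mathcal{V}_2)|\ge k$, you choose a correcting set $\mathcal{D}$ there with $\Sigma(\mathcal{D})\equiv-\Sigma(\mathcal{V}_1)\pmod p$, and observe that $\mathcal{V}_1\cup\mathcal{D}$ and $\mathcal{V}_2\cup\mathcal{D}$ both have sum $\equiv 0\pmod p$ but cannot both have sum $\equiv 0 \pmod q$. The reduction to the sets $\mathcal{W}_i$ is unnecessary, as you note yourself, while your explicit use of the distinctness of $\B$ modulo $p$ to obtain a genuine $k$-subset of $\Z_p$ makes precise a step the paper leaves implicit.
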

\begin{proof}
Suppose that $\mathcal{V}_1, \mathcal{V}_2\subset \B$ are such that 
$\sum (\mathcal{V}_1) \equiv  \sum (\mathcal{V}_2) \pmod{ p}$ but
$\sum (\mathcal{V}_1) \not\equiv  \sum (\mathcal{V}_2) \pmod q$, 
and $\left|\mathcal{V}_1 \cup  \mathcal{V}_2 \right| \le |\B|-k$. Then 
$|\B \backslash (\mathcal{V}_1 \cup \mathcal{V}_2)| \ge k$
and by definition of $k$, we can find a subset $\mathcal{W} \subset
\B \backslash (\mathcal{V}_1 \cup \mathcal{V}_2)$ such that
$\Sigma (\mathcal{W}) \equiv
-\Sigma (\mathcal{V}_1) \pmod p$.
Then $\Sigma (\mathcal{V}_1 \cup \mathcal{W}) \equiv 0 \pmod p$, and as
$\Sigma (\mathcal{V}_1) \equiv \Sigma (\mathcal{V}_2) \pmod p$, also
$\Sigma (\mathcal{V}_2 \cup \mathcal{W}) \equiv 0 \pmod p$.
Now $\Sigma (\mathcal{V}_1) \not \equiv \Sigma (\mathcal{V}_2) \pmod q$, so
$\Sigma (\mathcal{V}_1 \cup \mathcal{W}) \not \equiv 0 \pmod q$
or $\Sigma (\mathcal{V}_2 \cup \mathcal{W}) \not \equiv 0 \pmod q$
contradicting our assumption that there is no subset $\A \subset \B$
with $\Sigma(\A) \equiv 0 \pmod p$ and $\Sigma(\A) \not \equiv 0 \pmod q$.
\end{proof}

\begin{lemma}
\label{lemmaB}
Keep the assumptions from Lemma \ref{lemmaA}. Let $x \in \Z_p$. Then
there exists $f(x) \in \Z_q$ such that whenever $\mathcal{V} \subset \B$
with
$|\mathcal{V}| \le k$ and $\Sigma (\mathcal{V}) \equiv x \pmod p$, then
$\Sigma (\mathcal{V}) \equiv f(x) \pmod q$. This defines a function $f: \Z_p \rightarrow \Z_q$.
\end{lemma}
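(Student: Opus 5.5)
The plan is to apply Lemma \ref{lemmaA} directly to any two small subsets with the same residue modulo $p$. Fix $x \in \Z_p$. If no subset $\mathcal{V} \subset \B$ with $|\mathcal{V}| \le k$ has $\Sigma(\mathcal{V}) \equiv x \pmod p$, we may define $f(x)$ arbitrarily, say $f(x)=0$; the statement is then vacuous for this $x$. Otherwise we pick one such $\mathcal{V}_0$ and set $f(x) := \Sigma(\mathcal{V}_0) \bmod q$. It remains to check that every other admissible $\mathcal{V}$ gives the same residue modulo $q$, which shows the choice is independent of $\mathcal{V}_0$.

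Take $\mathcal{V}_1, \mathcal{V}_2 \subset \B$ with $|\mathcal{V}_i| \le k$ and $\Sigma(\mathcal{V}_1) \equiv \Sigma(\mathcal{V}_2) \equiv x \pmod p$, and suppose, for a contradiction, that $\Sigma(\mathcal{V}_1) \not\equiv \Sigma(\mathcal{V}_2) \pmod q$. We are working under the assumptions of Lemma \ref{lemmaA}: no $\A \subset \B$ has $\Sigma(\A) \equiv 0 \pmod p$ and $\Sigma(\A) \not\equiv 0 \pmod q$. So Lemma \ref{lemmaA} applies to the pair $\mathcal{V}_1, \mathcal{V}_2$ and gives $|\mathcal{V}_1 \cup \mathcal{V}_2| > |\B| - k > 3k$.

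On the other hand, $|\mathcal{V}_1 \cup \mathcal{V}_2| \le |\mathcal{V}_1| + |\mathcal{V}_2| \le 2k$. This contradicts the bound $> 3k$ (and a fortiori $>|\B|-k$, since $|\B|>4k$). Hence $\Sigma(\mathcal{V}_1) \equiv \Sigma(\mathcal{V}_2) \pmod q$, so $f(x)$ is well defined, and this gives a function $f:\Z_p \to \Z_q$.

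There is no real obstacle here. The only points needing care are the case where no admissible $\mathcal{V}$ exists, where $f(x)$ is chosen arbitrarily, and the arithmetic check $2k \le 3k < |\B|-k$, which uses the hypothesis $|\B|>4k$ from Proposition \ref{keyprop}.
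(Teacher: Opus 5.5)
Your proof is correct and is exactly the argument the paper intends when it says the lemma ``follows immediately from Lemma \ref{lemmaA}'': two admissible sets have union of size at most $2k < |\B|-k$, so their sums cannot differ modulo $q$. Your fallback case, where no admissible $\mathcal{V}$ exists, never actually arises, because $|\B|>4k$ and the elements of $\B$ are distinct modulo $p$, so any $k$-subset of $\B$ has a subsum $\equiv x \pmod p$; defining $f(x)$ arbitrarily there is harmless.
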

\begin{proof}
This follows immediately from Lemma \ref{lemmaA}.
\end{proof}

\begin{lemma}
\label{lemmaC}
Keep the assumptions from Lemma \ref{lemmaA}. Define a map
$f: \Z_p \rightarrow \Z_q$ via Lemma \ref{lemmaB}. Then $f$
is a group homomorphism between the additive groups $\Z_p$ and
$\Z_q$.
\end{lemma}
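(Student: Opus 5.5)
We have to show that $f(x+y)=f(x)+f(y)$ for all $x,y\in\Z_p$. The plan is to realise $x$, $y$ and $x+y$ by small disjoint subsets of $\B$ and then invoke Lemma \ref{lemmaB}. The well-definedness of $f$ on sets of size at most $k$ does all the work, provided the relevant sets are small enough.

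\textbf{Step 1 (small representations).} Since $|\B|>4k$ and the elements of $\B$ are distinct modulo $p$, every $k$-subset of $\B$ reduces to a $k$-subset of $\Z_p$. Its subsums therefore cover $\Z_p$. In particular every $x\in\Z_p$ is $\Sigma(\mathcal{V})$ modulo $p$ for some $\mathcal{V}$ contained in any prescribed $k$-subset of $\B$, so $|\mathcal{V}|\le k$; this is the source of the values $f(x)$.

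\textbf{Step 2 (additivity via disjoint pieces).} Given $x,y$, fix disjoint $k$-subsets $\mathcal{C}_1,\mathcal{C}_2\subset\B$, which is possible since $|\B|>2k$. Choose $\mathcal{V}_1\subset\mathcal{C}_1$ with $\Sigma(\mathcal{V}_1)\equiv x \pmod p$ and $\mathcal{V}_2\subset\mathcal{C}_2$ with $\Sigma(\mathcal{V}_2)\equiv y\pmod p$. Then $\Sigma(\mathcal{V}_1\cup\mathcal{V}_2)\equiv f(x)+f(y)\pmod q$ and $\equiv x+y \pmod p$. Next choose $\mathcal{U}$ with $|\mathcal{U}|\le k$ and $\Sigma(\mathcal{U})\equiv x+y\pmod p$, so that $\Sigma(\mathcal{U})\equiv f(x+y)\pmod q$. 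Now $\mathcal{V}_1\cup\mathcal{V}_2$ and $\mathcal{U}$ have equal sums modulo $p$, and their union has size at most $3k$. Lemma \ref{lemmaA} says that two subsets with equal sums modulo $p$ but different sums modulo $q$ must have union larger than $3k$. Hence their sums agree modulo $q$, giving $f(x+y)=f(x)+f(y)$.

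\textbf{Main obstacle.} The only delicate point is this size bookkeeping: $\mathcal{V}_1\cup\mathcal{V}_2$ may have up to $2k$ elements, so Lemma \ref{lemmaB} does not apply to it directly. The comparison must go through Lemma \ref{lemmaA}'s bound $|\B|-k>3k$, which is exactly where the hypothesis $|\B|>4k$ is used.

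Finally, $f(0)=0$, witnessed by $\mathcal{V}=\emptyset$ (consistent with the additivity shown above). Additivity then gives $p\,f(1)=f(0)=0$ in $\Z_q$, so $f$ is a genuine homomorphism $\Z_p\to\Z_q$, namely $x\mapsto x f(1)$ with $f(1)$ killed by $p$.
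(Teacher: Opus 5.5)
Your proposal is correct and takes essentially the same route as the paper. You represent $x$ and $y$ by disjoint subsets of size at most $k$ and $x+y$ by a third subset of size at most $k$. Since their union has at most $3k<|\B|-k$ elements, Lemma \ref{lemmaA} matches the sums modulo $q$, and Lemma \ref{lemmaB} then identifies them with $f(x)+f(y)$ and $f(x+y)$. The only cosmetic difference is that you fix two disjoint $k$-subsets in advance, whereas the paper chooses $\mathcal{Y}$ inside $\B\setminus\mathcal{X}$ after picking $\mathcal{X}$.
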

\begin{proof}
Let $x,y\in \Z_p$, and choose a set $\mathcal{X}\subset \B$
with $\left|\mathcal{X} \right|\leq k$ and $\sum (\mathcal{X})
\equiv x \pmod p$
which is possible by definition of $k$.
Since $\left|\B \setminus  \mathcal{X} \right|>k$,
in a similar way we can find $\mathcal{Y} \subset \B \setminus \mathcal{X}$ 
with $\left|\mathcal{Y}
\right|\leq k$ and $\sum (\mathcal{Y}) \equiv y \pmod p$. 
Finally choose $\mathcal{W} \subset \mathcal{B}$ with $|\mathcal{W}|
\le k$ such that
$\sum (\mathcal{W}) \equiv x+y \pmod p$. Then
\[   \sum (\mathcal{X}\cup \mathcal{Y}) \equiv  \sum (\mathcal{W})
\equiv  x+y \pmod{ p},  \]
so by Lemma \ref{lemmaA} we have 
\[
  \sum (\mathcal{X}\cup \mathcal{Y}) \equiv  \sum (\mathcal{W}) \pmod q.
\]
Now $|\mathcal{X}| \le k$ and $\sum (\mathcal{X}) \equiv x \pmod p$, so
\[
  \sum (\mathcal{X}) \equiv f(x) \pmod q
\]
by Lemma \ref{lemmaB}. Analogously,
\[
  \sum (\mathcal{Y}) \equiv f(y) \pmod q,
\]
and
\[
  \sum (\mathcal{W}) \equiv f(x+y) \pmod q.
\]
We conclude that
\begin{align*}
   f(x+y) & \equiv  \sum (\mathcal{W}) \equiv
   \sum (\mathcal{X}\cup \mathcal{Y}) \equiv 
   \sum (\mathcal{X}) + \sum (\mathcal{Y})\\
   & \equiv  f(x) + f(y) \pmod q,\\
\end{align*}
as required.
\end{proof}

\begin{proof}[Proof of Proposition \ref{keyprop}]
Assume that the opposite holds, i.e.~that
there is no subset $\A \subset \B$
with $\Sigma(\A) \equiv 0 \pmod p$ and $\Sigma(\A) \not \equiv 0 \pmod q$.
Now take an arbitrary $b\in \B$. By considering the set $\{b\}$ we see that
$f(b')=b$, where $b'$ is the residue of $b$ modulo $p$
and $f:\Z_p \rightarrow \Z_q$ is the map introduced in Lemma \ref{lemmaB}. As we know
from Lemma \ref{lemmaC} that $f$ is a homomorphism, we have
\begin{align*}
   pb & \equiv pf(b') \equiv f(b')+\cdots+f(b') \equiv f(b'+\cdots+b')
   \equiv f(pb') \equiv f(0)\\ & \equiv 0 \pmod q,
\end{align*}
that is, $b$ is a multiple of $m$, in contradiction to the assumption that
not all elements of $B$ are multiples of $m$.
\end{proof}




\begin{proof}[Proof of Corollary \ref{schwarzwald}]
Choose $\A_1 \subset \B$ with $|\A_1 \mod p|=\lceil 2 \sqrt{p} \rceil+1$
and such that $\B \backslash \A_1$ still contains at least one element not
divisible by $m=p^{\ell-1}$. By Lemma \ref{olson}, there exists $\A_2 \subset \A_1$
such that
\[
  a_0+\sum (\A_2) \equiv 0 \pmod p.
\]
If $a_0+\sum (\A_2) \not \equiv 0 \pmod q$, then $\A=\A_2$ satisfies
\eqref{wind} and \eqref{kuehl} and we are done, hence suppose that
\[
  a_0+\sum (\A_2) \equiv 0 \pmod q.
\]
Now $|(\B \backslash \A_1) \bmod p|>4 \lceil 2 \sqrt{p} \rceil$,
and $\B \backslash \A_1$
contains at least one element not divisible by $m=p^{\ell-1}$. Hence, by Theorem
\ref{Ruzsa}, there exists $\A_3 \subset \B \backslash \A_1$ such that
\[
  \sum (\A_3) \equiv 0 \pmod p
\]
and
\[
  \sum (\A_3) \not \equiv 0 \pmod q.
\]
Set $\A=\A_2 \cup \A_3$ and note that $\A_2$ and $\A_3$ are disjoint. Hence
\[
  a_0+\sum(\A) = a_0+\sum(\A_2) + \sum(\A_3) \equiv 0+0 = 0 \pmod p
\]
and
\[
  a_0+\sum(\A) =
  a_0+\sum(\A_2) + \sum(\A_3) \equiv 0 + \sum (\A_3) \not \equiv 0
  \pmod q
\]
as required.
\end{proof}
\subsection{Application of the larger sieve}

\begin{proof}[Proof of Theorem \ref{densitytheorem}]


By Corollary \ref{schwarzwald}
we know that, for $p \in {\cal T}$, ${\cal A}$ lies in at most
$5 \lceil 2 \sqrt{p} \rceil+2$
distinct residue classes modulo $p$, for otherwise there exists an
element of $\mathcal{H}(a_0; a_1, \ldots, a_d)$ which is divisible by
$p$ but not by $p^r$.
This enables us to use Gallagher's larger sieve (Lemma \ref{lem:Gallagher}):

With $\nu(p) \le 5 \lceil 2\sqrt{p} \rceil +1$
the set $\mathcal{P}'$ in Lemma \ref{lem:Gallagher} is the set of 
primes in
$\mathcal{T}$
with $p \leq y =C_2 (\log N)^2$, 
where for example $C_2=(\frac{20}{\tau})^2$.

This gives the bound
\[{\cal A}(N) \leq \frac{- \log N + \sum_{p \in {\cal P}'} \log p}{
  - \log N +\sum_{p \in {\cal P}'} \frac{\log p}{5 \lceil 2\sqrt{p}
  \rceil+1}}
\leq \frac{2y}{-\log N+ \sqrt{y}\tau/10}\ll 
 \frac{\log N}{\tau^2},\]
which proves Theorem \ref{densitytheorem}.
\end{proof}

\begin{proof}[Proof of Theorem \ref{2nddensitytheorem}]
The proof is similar to that of Theorem \ref{densitytheorem}.
Let $p \not\in \cal{T}$.
If $|{\cal A}\bmod p| >2 \sqrt p$,
then Lemma \ref{olson} guarantees that all residue classes,
including the class $0$,
is contained in ${\cal H}\bmod p$ as a nonempty sum of the Hilbert cube.
As the Hilbert cube does not contain a sum being $0 \bmod p$ for any prime
$p \not\in {\cal T}$, then for all $p \not\in {\cal T}$
we have  $|{\cal A}\bmod p| \leq 2 \sqrt p$.
Then an application of Gallagher's larger sieve shows that for a
suitable $c=c(\tau)>0$ we have
\[
  |{\cal A}|\leq \frac{-\log N+\sum_{p \le y, p \not\in {\cal T}} \log p }{
 - \log N + \sum_{p \le y, p \not\in {\cal T}} \frac{\log p}{2 \sqrt{p}} }
  \leq \frac{2y}{- \log N + c\sqrt{y/2}}\ll_{\tau} \log N,
\]
with sufficiently large $y\gg_{\tau} (\log N)^2$.
\end{proof}

\ \\
\ \\
\section{Subset sums in pure powers, proof of Theorem \ref{thm:pure-powers}}
In two earlier papers \cite{DietmannandElsholtz, DietmannandElsholtz2}
the first two authors studied bounds on the dimension of subset sums or Hilbert cubes in various sets. For subsetsums in pure powers an important ingredient was an upper bound (\cite{DietmannandElsholtz2}, Lemma 4.8)
$ l \ll \frac{\log \log N}{\log \log \log N}$
on the maximal length $\ell$ of a homogeneous arithmetic progression in the set of pure powers. Taking advantage of recent
improvements on the sunflower lemma we can update our
earlier result to Theorem \ref{thm:pure-powers}.


To keep the proof short we comment on the required changes in the proof of Theorem 1.7 of \cite{DietmannandElsholtz2}.
Lemma 4.4 in that paper was actually proved in the precursor paper \cite{DietmannandElsholtz} Lemma 5 with an ad hoc inductive proof, 
but it is indeed a consequence
of the Erd\H{o}s-Rado sun flower theorem, see \cite{ErdosandRado:1960}.
This sunflower result has been recently improved, first by Alweiss et.~al.~\cite{Alweiss-Lovett-Wu-Zhang:2021}, and then by Bell et.~al.~\cite{Bell-Chueluecha-Warnke:2021}.
They proved: Let $v\geq 3$. For some positive constant $C$ and for any family ${\cal F}$
of sets with at most $h$ elements each the following holds:
If $|{\cal F}|\geq (v \log h)^h$, then there is a sunflower with $v$ \emph{petals}.
Applied to this situation this means:
There are $v$ sets for which the pairwise intersection equals the intersection of all $v$ sets, the \emph{kernel}.
Discarding the kernel of each of the $v$ sets there are 
$v$ disjoint sets of integers with identical sums $s$. This means that the homogeneous arithmetic progression
$0,s,2s, \ldots, (v-1)s$ of length $v$ is contained in the subset sum cube.

A combined and new version of Lemmas 4.4 and 4.5
of \cite{DietmannandElsholtz2} yields\footnote{The expression $v \geq 3$ corrects a misprint in the published version 4.4 of  
\cite{DietmannandElsholtz2}}:
\begin{lemma}
Let $A=\{a_1, \ldots, a_d\} \subset [1,N]$ be a set of distinct integers and let $H(0;a_1, \ldots, a_d)\subset S \cap [1,N]$ be the subset sum cube, where $S$ is a set of positive integers without a homogeneous arithmetic progression of length $v\geq 3$. 
Let $g(h,N)$ denote the maximum of the number of representations to write any integer in $[1,N]$ as a sum of $h$ summands of $A$, irrespective of 
the order of the elements.
Then, for any $h \leq d$,
$g(h,N)\leq (C v \log h)^h$.
Further, suppose that $d \geq 5h+4$,  and recall that $f(N)\leq (\log N)^{M}$, for some constant $M$.
Then 
$d\leq (5h!f(N)g(h,N))^{1/h}+5h+4$. 
\end{lemma}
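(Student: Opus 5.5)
The lemma has two parts, and I will prove them separately.

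\emph{The bound on $g(h,N)$.} Fix an integer $s\in[1,N]$ and let $\mathcal{F}$ be the family of $h$-element subsets $\{a_{i_1},\dots,a_{i_h}\}\subset A$ whose sum is $s$. Since the $a_i$ are distinct and order is ignored, $g(h,N)$ is the maximum of $|\mathcal{F}|$ over $s$. Suppose $|\mathcal{F}|\ge (Cv\log h)^h$. Then the sunflower theorem of Alweiss et al.\ and Bell et al.\ gives $v$ sets $F_1,\dots,F_v\in\mathcal{F}$ forming a sunflower with kernel $K$.

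Removing the kernel leaves the petals $P_j=F_j\setminus K$. These are pairwise disjoint, and each has sum $s-\Sigma(K)=:t$. Moreover $t>0$: the $F_j$ are distinct, so the petals are nonempty, and all elements are positive. Disjointness gives, for each $1\le r\le v$, that $P_1\cup\dots\cup P_r$ is a subset of $A$ with sum $rt$. Hence $t,2t,\dots,vt$ all lie in the subset sum cube, and therefore in $S$. This is a homogeneous progression of length $v$ in $S$, a contradiction. Absorbing constants (the case $h$ small is handled by enlarging $C$) gives $g(h,N)\le (Cv\log h)^h$.

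\emph{The bound on $d$.} I would follow the counting argument of Lemma 4.5 in \cite{DietmannandElsholtz2}, where $f(N)$ bounds the length of the longest homogeneous progression in $S\cap[1,N]$, so $v$ may be taken as $f(N)+1$.

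1. Since $d\ge 5h+4$, the number of $h$-subsets of $A$ is $\binom{d}{h}\ge (d-5h-4)^h/h!$ up to harmless factors.
2. All these subset sums lie in $[1,N]$, and each value is hit at most $g(h,N)$ times. So the number of distinct values of $h$-subset sums is at least $\binom{d}{h}/g(h,N)$.
3. The earlier argument shows that the number of distinct $h$-fold sums in the cube is at most $5f(N)$. It does this by exploiting the absence of long progressions among sums built from the remaining $5h+4$ or so spare elements.
4. Comparing the two counts yields $(d-5h-4)^h\le 5\,h!\,f(N)\,g(h,N)$, which is the stated inequality.

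The main obstacle is step 3. It has to be transplanted verbatim from the old proof, and I would check that the only place the old Lemma 4.4 bound entered is through $g(h,N)$. That way, substituting the new sunflower bound is the only change needed.
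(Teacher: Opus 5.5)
Your proof of the bound on $g(h,N)$ is the paper's argument. You apply the Alweiss et al./Bell et al.\ sunflower bound to the $h$-sets with a common sum, discard the kernel, and use the $v$ disjoint petals of equal sum $t$ to get a homogeneous progression in the cube. You are a bit more careful than the paper: the petals are nonempty because all sets have size $h$, and you exhibit $t,2t,\dots,vt\in S$ rather than $0,s,\dots,(v-1)s$, which matters because $S$ consists of positive integers. One minor point: enlarging $C$ cannot handle $h=1$, where $\log h=0$. There you use $g(1,N)\le 1$ directly, or read $\log h$ as $\log(h+1)$; this is harmless since the application takes $h=\lfloor\log\log N\rfloor$.

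For the inequality $d\le(5\,h!\,f(N)\,g(h,N))^{1/h}+5h+4$ the paper gives no new argument. It is Lemma~4.5 of \cite{DietmannandElsholtz2}, where $g(h,N)$ enters only through an upper bound, with the new sunflower bound inserted. Your closing remark says exactly this, and that citation is all that is needed. Your reconstruction in steps 1--4, however, is wrong and should not be presented as the proof.

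First, $f(N)$ is not the length of the longest homogeneous progression, and $v$ is not $f(N)+1$. In the paper they are independent inputs: $v\ll\log\log N/\log\log\log N$ comes from Lemma~4.8 of \cite{DietmannandElsholtz2}, and separately there is the a priori bound $f(N)\le(\log N)^M$. The lemma bootstraps this weak bound, since $f(N)$ enters only through $f(N)^{1/h}=e^{M\log\log N/h}$.

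Second, under your reading step 3 cannot be true. For $h=1$ the distinct $1$-fold sums are just the $d$ distinct elements of $A$. Step 3 would then give $d\le 5f(N)\ll\log\log N/\log\log\log N$ outright. That is far stronger than Theorem~\ref{thm:pure-powers} and would make the lemma pointless. Consistently, the lemma at $h=1$, where $g(1,N)=1$, only says $d\le 5f(N)+9$. This is meaningful precisely because $f(N)$ is a weak polylogarithmic a priori bound.

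So either cite Lemma~4.5 as a black box, as the paper does, or reconstruct it with the correct meaning of $f(N)$. The sketch as written does not establish the second part.
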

Applying this to the set $V$ of pure powers gives:
\[d\leq (5h!)^{1/h} (C v \log h) e^{M(\log \log N)/h}+O(h).\] 
With $h=\lfloor \log \log N\rfloor$ 
and
$v=O(\frac{\log\log N}{\log \log \log N})$
this gives $d=O((\log \log N)^2)$.
\section{Proofs of Corollary \ref{cor:sumsof2squares} and Theorem \ref{2nddensitytheorem}}

Corollary \ref{cor:sumsof2squares} follows from Theorem \ref{densitytheorem}
if one recalls some properties of the values of binary quadratic forms.
Let $f(x,y)=a x^2+bxy+cy^2$  be an irreducible binary quadratic form with $a,b,c,x,y \in \Z$.
Let $d=b^2-4ac$ be the discriminant.


Let $p$ be a prime with $\left( \frac{b^2-4ac}{p} \right)=-1$,
and suppose that $p \mid n$ and $n$ is being represented by $f$,
i.e. $n=ax^2+bxy+cy^2$ for certain integers $x,y$.
Then $ax^2+bxy+cy^2 \equiv 0 \pmod p$. If $p \nmid y$, then
writing $w=x \overline{y}$, $\overline{y}$ being the inverse
of $y$ modulo $p$, we find that $aw^2+bw+c \equiv 0 \pmod p$,
whence the discriminant $d=b^2-4ac$ of the quadratic polynomial
$aw^2+bw+c$ must be a square modulo $p$, which contradicts
$\left( \frac{b^2-4ac}{p}\right)=-1$. As the same holds for
$p \nmid x$ we are left with the case $p \mid x$ and $p \mid y$. Here we have
$p^2 \mid n$.
 In other words, for the value set of this binary quadratic form $f$ there is a set of asymptotically about half of the primes such that
$p|f(x,y)$ implies $p^2|f(x,y)$.
The result then follows from theorem{\ref{densitytheorem}}.





\end{document}
\noindent Christian Elsholtz\\

TU GRAZ

\noindent Imre Z. Ruzsa\\
Alfr\'ed R\'enyi Institute of Mathematics\\
     Budapest, Pf. 127\\
     H-1364 Hungary
{\tt{ruzsa@renyi.hu}}
\end{document}